\newcommand{\N}{{\mathbb N}}
\begin{document}
\newtheorem{thm}{Theorem}[section]
\newtheorem{defs}[thm]{Definition}
\newtheorem{lem}[thm]{Lemma}
\newtheorem{rem}[thm]{Remark}
\newtheorem{cor}[thm]{Corollary}
\newtheorem{prop}[thm]{Proposition}
\renewcommand{\theequation}{\arabic{section}.\arabic{equation}}
\newcommand{\newsection}[1]{\setcounter{equation}{0} \section{#1}}
\title{Generalization of the theorems of Barndorff-Nielsen and Balakrishnan-Stepanov to Riesz spaces
      \footnote{{\bf Keywords:} Riesz spaces, vector lattices; zero-one laws, Borel-Cantelli theorem, conditional expectation operators.\
      {\em Mathematics subject classification (2010):} 46B40; 60F15; 60F25.}}
\author{Nyasha Mushambi, { Bruce A. Watson} \footnote{Supported in part by the Centre for Applicable Analysis and
Number Theory and by NRF grant number IFR2011032400120.}, Bertin Zinsou\\
School of Mathematics\\
University of the Witwatersrand\\
Private Bag 3, P O WITS 2050, South Africa}
\maketitle
\abstract{\noindent
In a Dedekind complete Riesz space, $E$, we show that if $(P_n)$ is a sequence of band projections in $E$ then
$$\limsup\limits_{n\to \infty} P_n - \liminf\limits_{n\to \infty} P_n = \limsup\limits_{n\to \infty} P_n(I-P_{n+1}).$$
This identity is used to obtain conditional extensions in a Dedekind complete Riesz spaces with weak order unit and conditional expectation operator of the Barndorff-Nielsen and Balakrishnan-Stepanov generalizations of the First Borel-Cantelli Theorem.
}
\parindent=0in
\parskip=.2in
\newsection{Introduction}

In \cite[Lemma 1*]{bn} Barndorff-Nielsen gave the following generalization of the First Borel-Cantelli Theorem.

\begin{thm}[Barndorff-Nielsen]
Let $(\Omega, \mathcal{F}, P)$ be a probability space. If $( A_n)$ is a sequence of events in the probability space $(\Omega, \mathcal{F}, P)$ such that $\displaystyle{\lim_{n\to \infty} P(A_n)= 0}$ and
\begin{equation*}
 \sum\limits_{n=1}^\infty P(A_n \cap A^c_{n+1}) < \infty,
\end{equation*}
then
$\displaystyle{P( \limsup_{n \to \infty}  A_n) = 0}.$
\end{thm}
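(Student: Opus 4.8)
The plan is to deduce the statement from the classical First Borel--Cantelli theorem by first proving an elementary set identity --- the measure-theoretic shadow of the band-projection identity announced in the abstract --- and then disposing of the remaining term using the hypothesis $P(A_n)\to 0$. The identity is
$$\Big(\limsup_{n\to\infty} A_n\Big)\setminus\Big(\liminf_{n\to\infty} A_n\Big)=\limsup_{n\to\infty}\big(A_n\cap A_{n+1}^c\big).$$
For ``$\subseteq$'': if $\omega$ lies in $\limsup A_n$ but not in $\liminf A_n$, then the $0$--$1$ sequence $\big(\mathbf{1}_{A_n}(\omega)\big)$ equals $1$ infinitely often and equals $0$ infinitely often; if it dropped from $1$ to $0$ only finitely often it would be eventually constant from the first index where it took the value $1$, contradicting one of those two facts, so $\omega\in A_n\cap A_{n+1}^c$ for infinitely many $n$. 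For ``$\supseteq$'': if $\omega\in A_n\cap A_{n+1}^c$ for infinitely many $n$, then $\omega\in A_n$ infinitely often and $\omega\notin A_{n+1}$ infinitely often, i.e.\ $\omega\in\limsup A_n$ and $\omega\notin\liminf A_n$.

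Given the identity, the proof closes quickly. Since $\sum_n P(A_n\cap A_{n+1}^c)<\infty$, the First Borel--Cantelli theorem gives $P\big(\limsup_n(A_n\cap A_{n+1}^c)\big)=0$, whence $P(\limsup_n A_n)=P(\liminf_n A_n)$. It remains to see that $P(\liminf_n A_n)=0$: for each $N$ we have $\bigcap_{n\ge N}A_n\subseteq A_m$ for all $m\ge N$, so $P\big(\bigcap_{n\ge N}A_n\big)\le\inf_{m\ge N}P(A_m)$, which tends to $0$ as $N\to\infty$ because $P(A_n)\to 0$; as $N\to\infty$ the sets $\bigcap_{n\ge N}A_n$ increase to $\liminf_n A_n$, so continuity of $P$ from below gives $P(\liminf_n A_n)=0$ (equivalently, apply Fatou's lemma for sets). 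Therefore $P(\limsup_n A_n)=0$.

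The only real content is the set identity; I do not anticipate a genuine obstacle in the classical argument. Recording it is worthwhile because it is exactly the skeleton to be lifted to the Riesz-space setting: $\limsup$ of events becomes $\limsup$ of band projections, set difference becomes the operator difference $\limsup_n P_n-\liminf_n P_n$, the hypothesis $P(A_n)\to 0$ is replaced by order convergence to $0$ under a conditional expectation operator, and the transparent pointwise argument above must give way to the order-theoretic identity $\limsup_n P_n-\liminf_n P_n=\limsup_n P_n(I-P_{n+1})$ from the abstract --- which is where the actual work of the paper lies.
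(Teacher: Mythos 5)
Your proof is correct, and it is essentially the paper's own approach in scalar form: the paper quotes this classical theorem without proof, but its proof of the Riesz-space generalization (Theorem \ref{t7}) follows exactly your skeleton, with your set identity replaced by the band-projection identity $\limsup_n P_n-\liminf_n P_n=\limsup_n P_n(I-P_{n+1})$ of Theorem \ref{l2}, the First Borel--Cantelli theorem replaced by Corollary \ref{c1} applied to $Q_n=P_n(I-P_{n+1})$, and the hypothesis on $T(P_ne)$ used to dispose of the $\liminf$ term. Your observation that only $\liminf_n P(A_n)=0$ is actually needed is also consistent with the weakening the paper attributes to Chandra.
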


In  \cite[page 51]{chan}, it was shown that the assumption $\displaystyle{\lim_{n\to \infty}P(A_n)\to 0}$ could be weakened to $\displaystyle{\liminf_{n\to \infty} P(A_n)= 0}$.
In \cite[Lemma 3]{bs}, see also \cite[page 52]{chan}, Balakrishnan and Stepanov further extended the Barndorff-Nielsen Theorem, as below.

\begin{thm}[Balakrishnan-Stepanov]
Let $(\Omega, \mathcal{F}, P)$ be a probability space and $(A_n)$ be a sequence of events in $(\Omega, \mathcal{F}, P)$. If $P(A_n) \to 0$ as $n\to \infty$ and
\begin{equation*}
\sum\limits_{n=1}^\infty P(A^c_n\ \cap A^c_{n+1} \ \cap \ldots \cap \ A^c_{n+m-1} \ \cap \ A_{n+m}) < \infty 
\end{equation*} 
for some $m \geq 1$, then $P(A_n\ \textit{infinitely often}) = 0.$
\end{thm}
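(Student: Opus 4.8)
The plan is to control $P\!\left(\bigcup_{n\ge N}A_n\right)$ for large $N$ by a first‑hitting‑time decomposition that manufactures, on its ``late hit'' part, precisely the summable events of the hypothesis. Write $B_j:=A_j^c\cap A_{j+1}^c\cap\cdots\cap A_{j+m-1}^c\cap A_{j+m}$, so the standing assumption reads $\sum_{j\ge 1}P(B_j)<\infty$, and recall that by continuity of $P$ along the decreasing sets $\bigcup_{n\ge N}A_n$ one has $P\!\left(\limsup_{n\to\infty}A_n\right)=\lim_{N\to\infty}P\!\left(\bigcup_{n\ge N}A_n\right)$. Fix $N$; on the event $\bigcup_{n\ge N}A_n$ let $\tau=\tau_N:=\min\{n\ge N:\ \omega\in A_n\}$ be the first index at or after $N$ at which $A_n$ occurs. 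This setup is uniform in $m\ge 1$, so no separate base case is needed.

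The combinatorial core is the inclusion
\begin{equation*}
\bigcup_{n\ge N}A_n\ \subseteq\ \bigl(A_N\cup A_{N+1}\cup\cdots\cup A_{N+m-1}\bigr)\ \cup\ \bigcup_{j\ge N}B_j .
\end{equation*}
If $\tau\le N+m-1$, then $\omega\in A_\tau$ lies in the first block. If $\tau\ge N+m$, then minimality of $\tau$ forces $\omega\notin A_k$ for every $k$ with $N\le k<\tau$; in particular $\omega\notin A_{\tau-m},A_{\tau-m+1},\dots,A_{\tau-1}$ (these indices all lie in $[N,\tau)$ because $\tau-m\ge N$), while $\omega\in A_\tau$, whence $\omega\in B_{\tau-m}$ with $\tau-m\ge N$. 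I expect this to be the step requiring the most care: one must choose the right split — a hit within the first $m$ indices versus a hit that is necessarily preceded by a run of $m$ consecutive misses and therefore lands in some $B_j$ — and keep the index bookkeeping straight so that every ``miss'' invoked genuinely sits in the range $[N,\tau)$.

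Taking probabilities and using subadditivity,
\begin{equation*}
P\!\left(\bigcup_{n\ge N}A_n\right)\ \le\ \sum_{k=N}^{N+m-1}P(A_k)\ +\ \sum_{j\ge N}P(B_j) ,
\end{equation*}
and letting $N\to\infty$ the first (finite, $m$‑term) sum tends to $0$ since $P(A_n)\to 0$, while the second is the tail of the convergent series $\sum_j P(B_j)$; hence $P\!\left(\limsup_n A_n\right)=0$. An alternative route, closer to the Barndorff--Nielsen theorem quoted above, is to set $C_n:=A_n\cup\cdots\cup A_{n+m-1}$: then $\limsup_n C_n=\limsup_n A_n$, $P(C_n)\le\sum_{i=0}^{m-1}P(A_{n+i})\to 0$, and a short computation gives $C_n^c\cap C_{n+1}=B_n$, so that after trading $\sum_n P(C_n^c\cap C_{n+1})$ for the equivalent $\sum_n P(C_n\cap C_{n+1}^c)$ (equivalent because $P(C_n)\to 0$, the partial sums differing by $P(C_1)-P(C_{N+1})$), the Barndorff--Nielsen theorem applied to $(C_n)$ yields the conclusion at once; this second viewpoint is the one that sits well with the $\limsup/\liminf$ identity announced in the abstract.
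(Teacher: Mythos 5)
Your proposal is correct and follows essentially the same route as the paper's proof of its Riesz-space version of this result, Theorem \ref{t8}: your first-hitting-time inclusion is the scalar form of Lemma \ref{l3} and (\ref{eq5.w}), and your split of the resulting sum into the first $m$ indices (controlled by $P(A_n)\to 0$) plus a tail dominated by $\sum_{j\ge N}P(B_j)$ is exactly the splitting in (\ref{eq511})--(\ref{eq513}). Your alternative reduction to the Barndorff--Nielsen theorem via $C_n=A_n\cup\cdots\cup A_{n+m-1}$ is also correct, but it is not the route the paper takes.
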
 

We show, see Theorem \ref{l2}, that in a Dedekind complete Riesz space, $E$, if $(P_n)$ is a sequence of band projections, in $E$, then
$$\limsup\limits_{n\to \infty} P_n - \liminf\limits_{n\to \infty} P_n = \limsup\limits_{n\to \infty} P_n(I-P_{n+1}).$$

Let $E$ be a Dedekind Riesz space with weak order unit. A conditional expectation operator on $E$, as defined in \cite{wen1}, is 
a positive order continuous projection $T$ on $E$ with range, ${R}(T)$, a Dedekind complete Riesz subspace of $E$, and having $T(e)$ a weak order unit of $E$ for each weak order unit $e$ of $E$. We refer the reader to \cite{wen1} for background on conditional expectation operators on Riesz spaces and \cite{wen2} for stochastic processes in Riesz spaces with recent developments in \cite{azouzi} and \cite{grobler}. 

In the setting of a Dedekind complete Riesz spaces with weak order unit and conditional expectation operator we use the above
band projection identity to give conditional generalizations of the theorems Barndorff-Nielsen and Balakrishnan-Stepanov to Riesz spaces.

We thank the referees for their comments and corrections.
\newsection{Preliminaries}

We refer the reader to \cite{ali, lux, za1} for general background on Riesz spaces and band projections. 
From \cite{M-N}, or \cite[Theorem 12.7]{za1}, we have 
that if $(f_n)_{n\geq 1}$ is an order bounded sequence in the Dedekind complete Riesz space $E$, then the sequence converges if and only if
$\displaystyle{\liminf_{n\to \infty} f_n = \limsup_{n\to \infty} f_n}$ in which case this common value is the order limit of $(f_n)$ as 
$n\to \infty$ and is denoted $\displaystyle{\lim_{n\to \infty} f_n}$.

For a sequence $(P_n)$ of band projections in a Dedekind complete Riesz space, $E$, it is easy to verified that 
$\displaystyle{\prod_{n\in \mathbb{N}}P_n = \bigwedge_{n \in \mathbb{N}} P_n}$. 
Further for a sequence $(f_n)$ in the positive cone, $E_+$, of such a Riesz space $E$, if $(f_n)$ is order bounded and $f_n\wedge f_m = 0$ for all $n\neq m$ then $\displaystyle{\sum_{n\in \mathbb{N}}f_n = \bigvee_{n \in \mathbb{N}} f_n}$. This is the order limit of the result in \cite[page 110]{za1}.

The Fatou property (\ref{fatou}) holds for any positive order continuous operator, $T$, on a Dedekind complete Riesz space, however, our interest here is only with regards to conditional expectation operators. It is well known, but included here for completeness.

\begin{thm}[Fatou]\label{l1}
Let $E$ be a Dedekind complete Riesz space with conditional expectation operator $T$ with weak order unit $e= Te$. If $(f_n)$ is a  bounded sequence, in $E$, then
\begin{equation}\label{fatou}
T(\liminf\limits_{n\to \infty} f_n) \leq  \liminf\limits_{n\to \infty} Tf_n.
\end{equation}
\end{thm}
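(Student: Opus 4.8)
The plan is to reduce the inequality to the order continuity of $T$ combined with the monotone approximation of $\liminf$ from below. First I would set $g_n := \bigwedge_{k\ge n} f_k$ for each $n$; since $(f_n)$ is order bounded and $E$ is Dedekind complete, each $g_n$ exists, the sequence $(g_n)$ is increasing and bounded above, and hence $g_n \uparrow \bigvee_{n} g_n = \liminf_{n\to\infty} f_n$ directly from the definition of the inferior limit. In the same way, since $|Tf_n| \le T|f_n|$ and $(f_n)$ is order bounded, the sequence $(Tf_n)$ is order bounded, so $h_n := \bigwedge_{k\ge n} Tf_k$ exists for each $n$, the sequence $(h_n)$ is increasing and bounded above, and $h_n \uparrow \liminf_{n\to\infty} Tf_n$.

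Next I would use positivity of $T$: for every $k\ge n$ we have $g_n \le f_k$, hence $Tg_n \le Tf_k$, and taking the infimum over $k \ge n$ yields $Tg_n \le h_n$ for all $n$. The only step that does real work is the passage to the limit in $n$ on the left-hand side, and here I would invoke order continuity of $T$: from $g_n \uparrow \liminf_{n\to\infty} f_n$, equivalently $\bigl(\liminf_{n\to\infty} f_n\bigr) - g_n \downarrow 0$, order continuity of $T$ gives $T\bigl(\liminf_{n\to\infty} f_n\bigr) - Tg_n \downarrow 0$, that is $Tg_n \uparrow T\bigl(\liminf_{n\to\infty} f_n\bigr)$. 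Combining this with $Tg_n \le h_n$ and taking the supremum over $n$ gives
\[
T\left(\liminf_{n\to\infty} f_n\right) = \bigvee_{n} Tg_n \le \bigvee_{n} h_n = \liminf_{n\to\infty} Tf_n,
\]
which is precisely (\ref{fatou}).

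I expect the only points needing care to be the bookkeeping ensuring that all the relevant infima and suprema exist — this is exactly where Dedekind completeness and the order boundedness hypothesis on $(f_n)$ are used, the latter transferred to $(Tf_n)$ via positivity of $T$ — and the correct reading of order continuity of $T$ as the preservation of order convergence of monotone sequences. No deeper obstacle arises; positivity alone is not enough, so it is genuinely the order continuity of the conditional expectation operator that drives the argument.
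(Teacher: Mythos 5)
Your proof is correct and follows essentially the same route as the paper's: approximate $\liminf f_n$ from below by $g_n=\bigwedge_{k\ge n}f_k$, use positivity of $T$ to get $Tg_n\le\bigwedge_{k\ge n}Tf_k$, and pass to the limit via order continuity of $T$. Your extra bookkeeping (existence of the infima from Dedekind completeness and transfer of order boundedness to $(Tf_n)$) is a welcome refinement but does not change the argument.
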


\begin{proof}
Let $f\in E$, denote the limit inferior of $f_n$. For every $k\in \mathbb{N}$, define
$\displaystyle{
g_k =  \inf\limits_{n\geq k} f_n}$.
Then the sequence $(g_k)$  is increasing and converges to $f$. For $k\leq n$, we have $g_k \leq f_n$ so, as $T$ is a positive linear operator,
$Tg_k \leq Tf_n$.
Now taking the infimum over $n\geq k$, we have
$\displaystyle{
Tg_k \leq \inf_{n\geq k} Tf_n}$.
We know $g_k \uparrow$ with order limit $f$, so by the order continuity of $T$ we have,
$\displaystyle{
\lim_{k\to \infty}Tg_k= Tf}$.
Also
$\displaystyle{
\inf_{n\geq k} Tf_n \uparrow \liminf_{n\to \infty} Tf_n}$
so taking $k\to \infty$, we have
$\displaystyle{
Tf \leq \liminf_{n\to \infty} Tf_n}$.\qedhere
\end{proof}

If $f_n \leq g$, for all $n\geq 1$, then applying Theorem \ref{l1} to $(g-f_n)$ gives following Reverse Fatou inequality.

\begin{cor}\label{tt3}
Let $E$ be a Dedekind complete Riesz space with conditional expectation operator $T$ with weak order unit $e= Te$. If  $(f_n)\subset E^+$ and $g\in E^+$ with $f_n \leq g$ for all $n\geq 1$, then
\begin{equation*}
T(\limsup_{n\to \infty} f_n) \geq \limsup_{n\to \infty} Tf_n.
\end{equation*}
\end{cor}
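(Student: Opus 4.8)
The plan is to deduce the Reverse Fatou inequality directly from Theorem~\ref{l1} by the substitution already indicated in the text, namely by applying the Fatou inequality to the sequence $h_n := g - f_n$. First I would observe that since $0 \le f_n \le g$ for all $n$, the sequence $(h_n)$ lies in $E^+$ and is order bounded (by $g$), so Theorem~\ref{l1} applies to it and yields $T(\liminf_{n\to\infty} h_n) \le \liminf_{n\to\infty} Th_n$.

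Next I would rewrite both sides in terms of $f_n$. Because $(f_n)$ is order bounded (it sits between $0$ and $g$ in the Dedekind complete space $E$), $\limsup_{n\to\infty} f_n$ exists in $E$, and the lattice identity $\liminf_{n\to\infty}(g - f_n) = g - \limsup_{n\to\infty} f_n$ holds. Similarly, by linearity of $T$ and order boundedness of $(Tf_n)$, we get $\liminf_{n\to\infty} Th_n = \liminf_{n\to\infty}(Tg - Tf_n) = Tg - \limsup_{n\to\infty} Tf_n$. Substituting these into the Fatou inequality gives $T\bigl(g - \limsup_{n\to\infty} f_n\bigr) \le Tg - \limsup_{n\to\infty} Tf_n$, and then linearity of $T$ lets me cancel $Tg$ from both sides to obtain the claimed inequality $T(\limsup_{n\to\infty} f_n) \ge \limsup_{n\to\infty} Tf_n$.

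There is essentially no serious obstacle here; the only points that require a word of care are (i) confirming that the relevant limits superior and inferior exist, which follows from order boundedness of $(f_n)$ and $(Tf_n)$ together with Dedekind completeness of $E$ and of $R(T)$, and (ii) the passage $\liminf_{n\to\infty}(g - f_n) = g - \limsup_{n\to\infty} f_n$, which is the order-limit analogue of the classical identity and is immediate from $\inf_{n\ge k}(g - f_n) = g - \sup_{n\ge k} f_n$ followed by letting $k\to\infty$ and invoking the characterisation of order convergence recalled in the Preliminaries.
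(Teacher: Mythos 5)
Your proposal is correct and is exactly the argument the paper intends: the corollary is stated as a direct consequence of Theorem~\ref{l1} applied to the sequence $(g - f_n)$, which is precisely your substitution. The additional care you take with the existence of the limits and the identity $\liminf_{n\to\infty}(g - f_n) = g - \limsup_{n\to\infty} f_n$ only fills in details the paper leaves implicit.
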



\newsection{Sequences of Band projections}
We present two representation lemmas for sequences of band projections in Lemmas \ref{p5} and \ref{l3}. The first of these will
be used in the characterization of the difference between the superior and inferior limits of a sequence of band projections in Theorem \ref{l2} while the latter will be used in the proof of the Balakrishnan-Stepanov Theorem, Theorem \ref{t8}.

\begin{lem}\label{p5}
Let $E$ be a Dedekind complete Riesz space. If $(P_n)$ is a sequence of band projections on $E$, then
\begin{equation}\label{EE1}
P_n = Q+\sum_{i=1}^m Q_i,
\end{equation}
where the band projections  $Q=\prod\limits_{k=n}^{n+m}P_k$ and $Q_i=\prod\limits_{k=0}^{i-1} P_{n+k}(I-P_{n+i})$ have $QQ_i=0=Q_iQ_j$ for $i\ne j$.
\end{lem}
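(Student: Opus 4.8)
The plan is to prove (\ref{EE1}) by induction on $m$, using two standard facts about a Dedekind complete Riesz space: all band projections on $E$ commute, and the class of band projections is closed under $P\mapsto I-P$ and under finite products of mutually commuting members. Granting these, the operators $Q_i=\left(\prod_{k=0}^{i-1}P_{n+k}\right)(I-P_{n+i})$ and $Q=\prod_{k=n}^{n+m}P_k$ are band projections, being products of the commuting band projections $P_n,\dots,P_{n+i-1},\,I-P_{n+i}$, respectively $P_n,\dots,P_{n+m}$; recording this at the outset is the one structural point the argument relies on.

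The engine of the induction is the trivial one-step splitting: if $R$ is a band projection and $P$ a band projection commuting with $R$, then $R=R(I-P)+RP$, and $R(I-P)$ and $RP$ are orthogonal band projections (orthogonality follows from $R(I-P)\cdot RP=R^2P(I-P)=0$ by commutativity and idempotency). Taking $R=P_n$, $P=P_{n+1}$ gives the base case $P_n=P_n(I-P_{n+1})+P_nP_{n+1}=Q_1+\prod_{k=n}^{n+1}P_k$. For the inductive step I would apply the splitting with $R=\prod_{k=n}^{n+m}P_k$ and $P=P_{n+m+1}$, which rewrites $\prod_{k=n}^{n+m}P_k$ as $Q_{m+1}+\prod_{k=n}^{n+m+1}P_k$; substituting into the case-$m$ identity produces the case-$(m+1)$ identity, and iterating telescopes $P_n$ into $\prod_{k=n}^{n+m}P_k+\sum_{i=1}^m Q_i$, which is exactly (\ref{EE1}).

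For the orthogonality relations I would argue directly from the factorizations: for $1\le i<j\le m$ the projection $Q_j$ has $P_{n+i}$ among its commuting factors while $Q_i$ has $I-P_{n+i}$ among its, so $Q_iQ_j$ contains the factor $P_{n+i}(I-P_{n+i})=0$ and hence vanishes; the identical remark with $Q$ in place of $Q_j$ (noting $Q$ contains every $P_{n+k}$ for $0\le k\le m$) gives $QQ_i=0$.

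I do not expect a genuine obstacle here; the only care needed is bookkeeping with commuting band projections — making sure that $I-P_{n+i}$ and the relevant finite products genuinely are band projections, and that commutativity is invoked at each cancellation $P_{n+i}(I-P_{n+i})=0$ — after which both the telescoping identity and the orthogonality statements are immediate.
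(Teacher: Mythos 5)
Your proposal is correct and follows essentially the same route as the paper: the inductive one-step splitting $R = R(I-P)+RP$ is just the telescoping identity $\prod_{k=0}^{i-1}P_{n+k}(I-P_{n+i}) = \prod_{k=0}^{i-1}P_{n+k}-\prod_{k=0}^{i}P_{n+k}$ that the paper sums directly, and your orthogonality argument (extracting the factor $P_{n+i}(I-P_{n+i})=0$ by commutativity) matches the paper's, which phrases the same cancellation as the bound $0\le Q_jQ_i\le P_{n+i}(I-P_{n+i})=0$.
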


\begin{proof}
We observe that
\begin{equation*}\label{EE2}
 \sum\limits _{i=1}^{m}\prod\limits_{k=0}^{i-1} P_{n+k}(I-P_{n+i}) = \sum\limits_{i=1}^m\bigg(\prod\limits_{k=0}^{i-1} P_{n+k} - \prod\limits_{k=0}^i P_{n+k}\bigg)
 = P_n -\prod\limits_{k=n}^{n+m} P_k
\end{equation*}
hence proving \ref{EE1}.
Now, for $1\le i\le m$, we have
$$0\le QQ_i=\prod\limits_{k=n}^{n+m}P_k\prod\limits_{r=0}^{i-1} P_{n+r}(I-P_{n+i})
\le P_{n+i}\prod\limits_{r=0}^{i-1} P_{n+r}(I-P_{n+i})=0$$ 
and 
$$0\le Q_jQ_i=\prod\limits_{r=0}^{j-1} P_{n+r}(I-P_{n+j})\prod\limits_{k=0}^{i-1} P_{n+k}(I-P_{n+i})
\le P_{n+i}(I-P_{n+i})=0,$$ 
for $1\le i<j\le m$.
\end{proof}

\begin{lem}\label{l3}
	Let $E$ be a Dedekind complete Riesz space with weak order unit $e$. If $(P_n)$ is a sequence of band projections acting on $E$ then,
	\begin{equation}\label{m5}
	\bigvee_{n= k}^N P_n = P_k\vee\bigvee_{n= k+1}^N (P_n\prod\limits_{j=k}^{n-1}(I-P_j)).
	\end{equation}
\end{lem}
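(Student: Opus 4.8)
The plan is to prove the identity by induction on $N \geq k$, reorganizing the join $\bigvee_{n=k}^N P_n$ according to the \emph{first} index $n$ at which $P_n$ ``switches on''. The base case $N = k$ is trivial: both sides equal $P_k$. For the inductive step, assume (\ref{m5}) holds for some $N \geq k$ and consider $\bigvee_{n=k}^{N+1} P_n = \left(\bigvee_{n=k}^N P_n\right) \vee P_{N+1}$. Substituting the inductive hypothesis, the task reduces to showing
\begin{equation*}
\left(P_k \vee \bigvee_{n=k+1}^N \Bigl(P_n \prod_{j=k}^{n-1}(I-P_j)\Bigr)\right) \vee P_{N+1}
= P_k \vee \bigvee_{n=k+1}^{N+1} \Bigl(P_n \prod_{j=k}^{n-1}(I-P_j)\Bigr),
\end{equation*}
i.e. that adjoining $P_{N+1}$ to the left-hand side produces the same band as adjoining $P_{N+1}\prod_{j=k}^{N}(I-P_j)$.

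The key algebraic fact I would isolate and use is that for band projections $A, B$ one has $A \vee B = A \vee (B(I-A))$, since $B(I-A) = B - A\wedge B$ corresponds to the band $\operatorname{ran}(B) \cap \operatorname{ran}(A)^{\perp}$, whose join with $\operatorname{ran}(A)$ is $\operatorname{ran}(A)\vee\operatorname{ran}(B)$. Applying this iteratively, if $S$ denotes the left-hand side above without $P_{N+1}$ (so $S = \bigvee_{n=k}^{N}P_n$ by the inductive hypothesis, a band projection onto $\bigvee_{n=k}^N \operatorname{ran}(P_n)$), then $S \vee P_{N+1} = S \vee \bigl(P_{N+1}(I-S)\bigr)$. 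It remains to check that $P_{N+1}(I-S) = P_{N+1}\prod_{j=k}^{N}(I-P_j)$. This follows because $I - S$ is the band projection onto $\bigl(\bigvee_{n=k}^N \operatorname{ran}(P_n)\bigr)^{\perp} = \bigcap_{n=k}^N \operatorname{ran}(P_n)^{\perp}$, which is exactly the range of $\prod_{j=k}^{N}(I-P_j)$; hence $I - S = \prod_{j=k}^{N}(I-P_j)$, and commutativity of the band projections $P_{N+1}$ and $\prod_{j=k}^N(I-P_j)$ with one another is irrelevant to the product's value since all band projections on a Dedekind complete Riesz space commute.

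I expect the main obstacle to be the bookkeeping that confirms $I - \bigvee_{n=k}^{N}P_n = \prod_{j=k}^{N}(I-P_j)$, i.e. that the complement of a finite join of bands is the product (meet) of the complementary band projections. This is the De Morgan law for band projections; in the Dedekind complete setting it follows from the fact noted in the Preliminaries that $\prod_{n}P_n = \bigwedge_n P_n$ together with $I - \bigvee_n P_n = \bigwedge_n (I - P_n)$, the latter being the order-theoretic De Morgan identity valid because $p \mapsto I - p$ is an order-reversing bijection on the Boolean algebra of band projections. Once this is in hand, the inductive step is immediate, and the induction closes. As a minor point I would also remark that all the finite products appearing are genuinely band projections (idempotent, order continuous, range a band), which is standard since band projections on a Dedekind complete Riesz space commute and products of commuting band projections are band projections.
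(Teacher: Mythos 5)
Your argument is correct, but it follows a genuinely different route from the paper's. You induct on $N$, using the absorption identity $A\vee B=A\vee\bigl(B(I-A)\bigr)$ for band projections together with the De Morgan law $I-\bigvee_{n=k}^{N}P_n=\prod_{j=k}^{N}(I-P_j)$ to turn the freshly adjoined $P_{N+1}$ into $P_{N+1}\prod_{j=k}^{N}(I-P_j)$; each inductive step peels off the first index at which a projection ``switches on''. The paper instead argues in one shot: it writes $Q_n:=P_n\prod_{j=k}^{n-1}(I-P_j)=\prod_{j=k}^{n-1}(I-P_j)-\prod_{j=k}^{n}(I-P_j)$, observes that the $Q_n$ are pairwise disjoint so that their join equals their sum, and lets that sum telescope to $I-\prod_{j=k}^{N}(I-P_j)$, which equals $\bigvee_{j=k}^{N}P_j$ by the same De Morgan identity you invoke. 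The two proofs are close relatives---your induction is essentially the telescoping sum unrolled one term at a time---but the paper's version records two by-products that are reused verbatim in the proof of Theorem \ref{t8}: the disjointness $Q_nQ_m=0$ for $n\ne m$ and the consequent identity $\bigvee_{n}Q_n=\sum_{n}Q_n$ (see (\ref{2019-new-77})). Your induction establishes the lemma without exhibiting these, so if your proof were adopted the disjointness would still have to be checked separately for the later application. Your auxiliary remarks (that all band projections on $E$ commute and that finite products of band projections are band projections) are correct and standard.
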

\begin{proof}
	We observe the notational convention 
	$\displaystyle{\prod\limits_{j=k}^{k-1}(I-P_j)=I}$.
Setting
   $Q_n:=P_n\prod\limits_{j=k}^{n-1}(I-P_j)=\prod\limits_{j=k}^{n-1}(I-P_j)-\prod\limits_{j=k}^{n}(I-P_j)$
 we have that $Q_nQ_m=0$ for $n\ne m$ and thus 
\begin{equation}\label{baw-2019-12}
\bigvee_{n= k}^N P_n\prod\limits_{j=k}^{n-1}(I-P_j)=\sum_{n= k}^N\left(\prod\limits_{j=k}^{n-1}(I-P_j)-\prod\limits_{j=k}^{n}(I-P_j)\right)=I-\prod\limits_{j=k}^{N}(I-P_j).
\end{equation}
Here 
\begin{equation}\label{baw-2019-13}
\prod\limits_{j=k}^{N}(I-P_j)=\bigwedge_{j=k}^{N}(I-P_j)=I-\bigvee_{j=k}^{N}P_j.
\end{equation}
Combining (\ref{baw-2019-12}) and (\ref{baw-2019-13}) gives the required equality.
\end{proof}

\begin{thm}\label{l2}
Let $E$ be a Dedekind complete Riesz space. For $(P_n)$ a sequence of band projections in $E$,
\begin{equation}\label{91}
\limsup\limits_{n\to \infty} P_n - \liminf\limits_{n\to \infty} P_n = \limsup\limits_{n\to \infty} P_n(I-P_{n+1}).
\end{equation}
\end{thm}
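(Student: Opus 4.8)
The plan is to work entirely inside the Boolean algebra of band projections of $E$, which, since $E$ is Dedekind complete, is a complete Boolean algebra: for band projections $P,Q$ one has $P\wedge Q=PQ$, $P\vee Q=P+Q-PQ$, complement $I-P$, arbitrary suprema and infima exist (all band projections lie between $0$ and $I$), the infinite distributive law $P\wedge\bigvee_\alpha Q_\alpha=\bigvee_\alpha(P\wedge Q_\alpha)$ holds, and a finite sum of pairwise disjoint band projections equals their supremum. Writing $S_n=\bigvee_{k\ge n}P_k$, $T_n=\bigwedge_{k\ge n}P_k$, $V_n=P_n(I-P_{n+1})$ and $U_n=\bigvee_{k\ge n}V_k$, one has $R:=\limsup_n P_n=\bigwedge_n S_n$, $L:=\liminf_n P_n=\bigvee_n T_n$ and $M:=\limsup_n V_n=\bigwedge_n U_n$; from $T_n\le P_n\le S_n$ we get $L\le R$, so $R-L=R(I-L)=R\wedge(I-L)$, and the goal splits into $M\le R\wedge(I-L)$ and $R\wedge(I-L)\le M$.

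For $M\le R\wedge(I-L)$ I would argue as follows. Since $V_k\le P_k$ we get $U_n\le S_n$, hence $M\le R$. For disjointness from $L$, apply distributivity: $M\wedge L=\bigvee_n(M\wedge T_n)$, and $M\wedge T_n\le U_n\wedge T_n=\bigvee_{k\ge n}(V_k\wedge T_n)$; but for $k\ge n$ one has $T_n\le P_{k+1}$ while $V_k\le I-P_{k+1}$, so $V_k\wedge T_n=0$, giving $M\wedge T_n=0$ for every $n$ and thus $M\wedge L=0$, i.e.\ $M\le I-L$. Combining the two bounds gives $M\le R\wedge(I-L)=R-L$.

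The converse is where Lemma \ref{p5} does the work. Letting $m\to\infty$ in $P_n-\prod_{k=n}^{n+m}P_k=\sum_{i=1}^m Q_i$, where the $Q_i$ from Lemma \ref{p5} are pairwise disjoint band projections with $Q_i\le P_{n+i-1}(I-P_{n+i})=V_{n+i-1}$, the right-hand side is a supremum of band projections each below some $V_l$ with $n\le l\le n+m-1$, hence $\le U_n$; since $\prod_{k=n}^{n+m}P_k\downarrow T_n$ this yields the key inequality $P_n-T_n\le U_n$ for every $n$. Now $P_k=(P_k-T_k)\vee T_k$, the two pieces being disjoint band projections (their product vanishes because $T_k\le P_k$), so
$$S_n=\bigvee_{k\ge n}P_k=\Big(\bigvee_{k\ge n}(P_k-T_k)\Big)\vee\Big(\bigvee_{k\ge n}T_k\Big)\le U_n\vee L,$$
because $U_k$ decreases in $k$ (so $\bigvee_{k\ge n}(P_k-T_k)\le\bigvee_{k\ge n}U_k=U_n$) and $T_k$ increases in $k$ (so $\bigvee_{k\ge n}T_k=L$). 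Taking $\bigwedge_n$ and using the distributive law in the form $\bigwedge_n(U_n\vee L)=(\bigwedge_n U_n)\vee L$ gives $R\le M\vee L$, whence $R\wedge(I-L)\le(M\vee L)\wedge(I-L)=M\wedge(I-L)\le M$. Together with the first half this establishes (\ref{91}).

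The main obstacle is that there are no points available: the classical argument (a point lying in infinitely many $A_n$ but not eventually in all of them must, after each return, leave some $A_k$ at the very next stage, so it lies in $A_k\cap A_{k+1}^c$) has to be turned into the purely order-theoretic statement $P_n-T_n\le U_n$, and the passage through the finite telescoping of Lemma \ref{p5} followed by the order limit $\prod_{k=n}^{n+m}P_k\downarrow T_n$ is the delicate step linking $\limsup_n P_n$ with $\limsup_n P_n(I-P_{n+1})$. One should also keep track that every supremum and infimum written is taken in the band-projection Boolean algebra (legitimate because these families are bounded by $I$ and $E$ is Dedekind complete), and that the one-sided infinite distributive law invoked twice is exactly the standard property of complete Boolean algebras (with a one-line proof: if $x=\bigvee_\alpha(P\wedge Q_\alpha)$ then each $Q_\alpha=(P\wedge Q_\alpha)\vee((I-P)\wedge Q_\alpha)\le x\vee(I-P)$, so $\bigvee_\alpha Q_\alpha\le x\vee(I-P)$ and $P\wedge\bigvee_\alpha Q_\alpha\le P\wedge x\le x$).
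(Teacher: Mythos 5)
Your proof is correct and takes essentially the same route as the paper's: the easy inequality comes from $P_n(I-P_{n+1})\le P_n\wedge(I-P_{n+1})$, and the reverse one from letting $m\to\infty$ in Lemma \ref{p5} to obtain $P_n-\bigwedge_{k\ge n}P_k\le\bigvee_{i\ge n}P_i(I-P_{i+1})$, which is precisely the paper's key estimate. The only difference is the final bookkeeping: you pass to $S_n\le U_n\vee L$ and use the infinite distributive law of the complete Boolean algebra of band projections to get $R\le M\vee L$, whereas the paper takes the order $\limsup$ of the additive form $P_n\le\bigwedge_{k\ge n}P_k+\bigvee_{i\ge n}P_i(I-P_{i+1})$ and then applies $I-\liminf_{n\to\infty}P_n$; the two finishes are interchangeable.
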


\begin{proof}
Since $P_n(I-P_{n+1})\le P_n$ and $P_n(I-P_{n+1})\le I-P_{n+1}$, we have
\begin{equation*}
\limsup\limits_{n\to \infty} P_n(I-P_{n+1})\le \limsup\limits_{n\to \infty} P_n
\end{equation*}
and 
\begin{equation*}
\limsup\limits_{n\to \infty} P_n(I-P_{n+1})\le I-\liminf\limits_{n\to \infty} P_n.
\end{equation*}
Thus, since $\liminf\limits_{n\to \infty} P_n\le \limsup\limits_{n\to \infty} P_n$, 
\begin{equation}\label{inequality-baw-1}
\limsup\limits_{n\to \infty} P_n(I-P_{n+1})\le \limsup\limits_{n\to \infty} P_n \wedge (I-\liminf\limits_{n\to \infty} P_n)=
\limsup\limits_{n\to \infty} P_n-\liminf\limits_{n\to \infty} P_n.
\end{equation}

Taking the limit as $m\to\infty$ in Lemma \ref{p5}, we have
\begin{equation*}
P_n = \bigwedge_{k\ge n}P_k+\bigvee_{i\ge 1} \bigwedge_{k=0}^{i-1} P_{n+k}(I-P_{n+i})
\le \bigwedge_{k\ge n}P_k+\bigvee_{i\ge 1} P_{n+i-1}(I-P_{n+i}).
\end{equation*}
Hence,
\begin{equation}\label{baw-2019-2}
P_n \le \bigwedge_{k\ge n}P_k+\bigvee_{i\ge n} P_{i}(I-P_{i+1}).
\end{equation}
Taking the $\displaystyle{\limsup_{n\to\infty}}$ of both sides to (\ref{baw-2019-2}) gives
\begin{eqnarray*}
\limsup_{n\to\infty}P_n &\le& \bigwedge_{r\ge 1}\bigvee_{n\ge r}\bigwedge_{k\ge n}P_k+\bigwedge_{r\ge 1}\bigvee_{n\ge r}\bigvee_{i\ge n} P_{i}(I-P_{i+1})\\
&=& \bigwedge_{r\ge 1}\liminf_{k\to\infty} P_k+\bigwedge_{r\ge 1}\bigvee_{i\ge r} P_{i}(I-P_{i+1})\\
&=& \liminf_{k\to\infty} P_k+\limsup_{i\to\infty} P_{i}(I-P_{i+1}).
\end{eqnarray*}
Since $\displaystyle{\limsup_{n\to\infty}P_n\cdot \liminf_{k\to\infty} P_k=\liminf_{k\to\infty} P_k}$, applying $\displaystyle{I-\liminf_{n\to\infty}P_n}$ to both sides of the above gives
\begin{equation}\label{inequality-baw-2}
\limsup_{n\to\infty}P_n -\liminf_{k\to\infty} P_k\le 
\limsup_{i\to\infty} P_{i}(I-P_{i+1}).
\end{equation}
Combining (\ref{inequality-baw-1}) and (\ref{inequality-baw-2}) concludes the proof.
\end{proof}

\newsection{Borel-Cantelli type theorems in Riesz spaces}

In \cite[Corollary 4.2]{wen3} a generalization of the First Borel-Cantelli Lemma to Riesz spaces was given and it is quoted below.
Following this we generalize the Barndorff-Nielsen Theorem, in
Theorem \ref{t7}, and the Balakrishnan-Stepanov Theorem, in Theorem \ref{t8}, to Dedekind complete Riesz spaces with weak order unit and conditional expectation operator.

\begin{cor}[Borel-Cantelli Theorem in Riesz spaces]\label{c1}
Let $E$ be a Dedekind complete Riesz space with strictly positive conditional expectation operator T and weak order unit $e = Te$. 
Let be $(P_j)$ be a sequence of band projections in $E$.	
If $\displaystyle{\sum_{j=1}^{\infty}T(P_je)}$ is order convergent in $E$,  then $\displaystyle{\limsup\limits_{j\to \infty} P_j = 0}.$
\end{cor}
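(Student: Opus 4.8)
The plan is to imitate the classical proof of the First Borel--Cantelli Lemma, working with the tail sums of band projections and exploiting the Fatou-type inequalities established in Theorem~\ref{l1} and Corollary~\ref{tt3}. First I would set $Q_N := \bigvee_{j\ge N} P_j$, a decreasing sequence of band projections whose infimum over $N$ is precisely $\limsup_{j\to\infty} P_j$. Since band projections are idempotent and positive with $P_j\le Q_N$ for all $j\ge N$, and since $Q_N e$ is the supremum of the order-bounded family $\{P_j e : j\ge N\}$, I would bound $Q_N e$ above using the elementary fact that a supremum of a (countable) set of elements is dominated by the order sum of the corresponding ``disjointified'' pieces; more simply, $Q_N e \le \sum_{j\ge N} P_j e$ because in a Dedekind complete Riesz space the partial suprema $\bigvee_{j=N}^M P_j e$ are dominated by $\sum_{j=N}^M P_j e$ (each $P_j e \ge 0$), and taking the order limit in $M$ — legitimate since $\sum_{j\ge 1} T(P_j e)$ converges and hence, by a tail argument, the partial sums $\sum_{j=N}^M P_j e$ are order bounded — gives the claim.

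Next I would apply $T$. Using positivity and order continuity of $T$, together with the hypothesis that $\sum_{j=1}^\infty T(P_j e)$ is order convergent, I get $T(Q_N e) \le \sum_{j\ge N} T(P_j e) =: R_N$, where $R_N\downarrow 0$ as $N\to\infty$ (the tails of a convergent series of positive terms in a Dedekind complete Riesz space decrease to $0$). On the other hand, $Q_N e \downarrow (\limsup_j P_j)e$ and $T$ is order continuous, so $T(Q_N e)\downarrow T((\limsup_j P_j)e)$. Combining, $0 \le T((\limsup_j P_j)e) = \bigwedge_N T(Q_N e) \le \bigwedge_N R_N = 0$, hence $T((\limsup_j P_j)e) = 0$.

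Finally I would invoke strict positivity of $T$: if $P := \limsup_j P_j$ is a band projection with $T(Pe) = 0$ and $Pe\ge 0$, then $Pe = 0$; since $e$ is a weak order unit and $Pe$ generates the band onto which $P$ projects, $Pe = 0$ forces $P = 0$. This is the step where strict positivity is essential and is the one subtlety worth spelling out carefully — everything else is the Riesz-space transcription of a textbook computation.

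The main obstacle is the justification that $Q_N e \le \sum_{j\ge N} P_j e$ as an \emph{order} inequality, i.e.\ that the relevant suprema and sums exist and that passing to the order limit in the upper index is valid. The hypothesis is only that $\sum_j T(P_j e)$ converges, not $\sum_j P_j e$; so I must argue that order-boundedness of the $T$-images, pulled back through the tail structure, suffices to control the partial sums $\bigvee_{j=N}^M P_j e$. In fact one does not need $\sum_j P_j e$ to converge at all: $Q_N e$ is already order bounded (by $e$), being a band projection applied to $e$, so $T(Q_N e)\le T(e) = e$ trivially; the real content is the sharper bound $T(Q_N e)\le \sum_{j\ge N}T(P_j e)$, which follows from $T(Q_N e) = T\big(\bigvee_{j\ge N}P_j e\big) = \lim_M T\big(\bigvee_{j=N}^M P_j e\big) \le \lim_M T\big(\sum_{j=N}^M P_j e\big) = \sum_{j\ge N} T(P_j e)$, using order continuity of $T$ at each equality and positivity at the inequality. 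Once this chain is in place, the conclusion is immediate.
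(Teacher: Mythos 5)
The paper does not actually prove Corollary~\ref{c1}: it is quoted verbatim from \cite[Corollary 4.2]{wen3}, so there is no internal argument to compare against. Your proof is correct and is the natural Riesz-space transcription of the classical first Borel--Cantelli lemma, and the version in your final paragraph is the right way to organise it: you apply $T$ to the finite partial suprema, use $T\bigl(\bigvee_{j=N}^M P_je\bigr)\le\sum_{j=N}^M T(P_je)$, and only then let $M\to\infty$, which sidesteps any need for $\sum_j P_je$ itself to converge (only the tails $R_N=\sum_{j\ge N}T(P_je)$ must exist and decrease to $0$, which follows from the hypothesis since the terms are positive). Two points deserve explicit justification rather than being taken for granted: first, the identity $\bigl(\bigvee_{j\ge N}P_j\bigr)e=\bigvee_{j\ge N}P_je$ and its decreasing counterpart $\bigl(\bigwedge_N Q_N\bigr)e=\bigwedge_N Q_Ne$ for $e\ge 0$, which hold because the map $P\mapsto Pe$ is an order-continuous Boolean homomorphism on band projections (see \cite{za1}); second, the last step, where $Pe=0$ forces $P=0$ because $Pe$ is a weak order unit of the band $PE$ whenever $e$ is a weak order unit of $E$ and $P$ is a band projection. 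With those two facts cited, the argument is complete and matches the standard proof in \cite{wen3}.
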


\begin{thm}[Barndorff-Nielsen Theorem in Riesz spaces]\label{t7}
Let $(P_n)$ be a sequence of band projections on a Dedekind complete Riesz space $E$ with strictly positive conditional expectation operator $T$ and weak order unit $e = Te$ such that 
$\displaystyle{\liminf_{n\to \infty} T(P_ne)= 0}$ and
$\displaystyle{\sum\limits_{n=1}^\infty T\big(P_n(I-P_{n+1})e\big)}$ is convergent in $E$,
then
$\displaystyle{\limsup_{n\to \infty} P_n= 0}.$
\end{thm}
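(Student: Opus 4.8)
The plan is to reduce the theorem to the Riesz-space Borel--Cantelli result (Corollary~\ref{c1}) together with the band-projection identity of Theorem~\ref{l2}, and then to recover $\limsup_n P_n=0$ from the $\liminf$ hypothesis by a Fatou argument and the strict positivity of $T$. None of the steps should require any genuinely new idea beyond assembling the tools already in the paper.

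First I would observe that $(P_n(I-P_{n+1}))_{n\ge 1}$ is again a sequence of band projections on $E$ (band projections in a Dedekind complete Riesz space form a Boolean algebra), and that the order convergence of $\sum_{n=1}^\infty T\big(P_n(I-P_{n+1})e\big)$ is precisely the hypothesis needed to apply Corollary~\ref{c1} to this sequence; this yields $\limsup_{n\to\infty}P_n(I-P_{n+1})=0$. Feeding this into Theorem~\ref{l2} gives $\limsup_{n\to\infty}P_n-\liminf_{n\to\infty}P_n=0$, so the band projection $P:=\limsup_n P_n=\liminf_n P_n$ is well defined, and by the convergence criterion quoted from \cite{M-N} in the Preliminaries the order-bounded sequence $(P_n e)$ converges in order to $Pe$ (equivalently $\big(\bigwedge_{k\ge n}P_k\big)e\uparrow Pe$, using order continuity of band projections).

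With the sequence now convergent, the $\liminf$ hypothesis finishes the job. Applying the Fatou inequality (Theorem~\ref{l1}) to the bounded sequence $(P_ne)$ gives
\[
0\le T(Pe)\le T\Big(\liminf_{n\to\infty}(P_ne)\Big)\le \liminf_{n\to\infty}T(P_ne)=0,
\]
so $T(Pe)=0$. Since $T$ is strictly positive and $Pe\ge 0$, this forces $Pe=0$, and since $e$ is a weak order unit, a band projection $P$ with $Pe=0$ must itself be $0$; hence $\limsup_{n\to\infty}P_n=0$.

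I expect the only friction to be bookkeeping at the interface between the operator level (where $\limsup_n P_n$, $\liminf_n P_n$ live) and the element level of $E$: namely, checking that the operator identity $\limsup_n P_n=\liminf_n P_n$ really does deliver order convergence of $(P_ne)$ to $Pe$, and the closing observation that a band projection annihilating the weak order unit vanishes. The structural heart of the argument --- combining Corollary~\ref{c1} with Theorem~\ref{l2} --- is immediate.
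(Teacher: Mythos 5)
Your proposal is correct and follows essentially the same route as the paper: apply Corollary~\ref{c1} to the band projections $Q_n=P_n(I-P_{n+1})$, invoke Theorem~\ref{l2} to get $\limsup_n P_n=\liminf_n P_n$, and then use the $\liminf$ hypothesis together with order continuity (or, as you do, Fatou) and strict positivity of $T$ to conclude. The only cosmetic difference is that the paper passes through $\lim_n T(P_ne)=T(\lim_n P_ne)$ by order continuity where you use the Fatou inequality, and your closing remark that a band projection killing the weak order unit is zero makes explicit a step the paper leaves implicit.
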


\begin{proof}
Let
\begin{equation}\label{q6.1}
Q_n = P_n(I-P_{n+1}) , n\geq 1.
\end{equation}
Applying Corollary \ref{c1} to $(Q_n)$, as $\sum\limits_{n=1}^\infty  TQ_ne$ converges in $E$ then $\underset{n \to \infty } {\limsup}\ Q_n = 0$. 
Thus, $\underset{n \to \infty } {\limsup}\  P_n(I-P_{n+1})= 0$, so, by Theorem \ref{l1}, $$\underset{n \to \infty } {\liminf}\ P_n = \underset{n \to \infty } {\limsup}\ P_n= \underset{n \to \infty } {\lim}\ P_n.$$
Hence, as $T$ is order continuous,
\begin{equation*}
0=\liminf_{n\to \infty} T(P_ne)=\underset{n \to \infty } {\lim}\ TP_ne=T(\underset{n \to \infty } {\lim}\ P_ne)=T( \liminf_{n\to \infty}  P_ne).
\end{equation*}
Now, since $T$ is a strictly positive,  we have that
$\displaystyle{\liminf_{n\to \infty} P_ne = 0}.$
\end{proof}

\begin{thm}[Balakrishnan-Stepanov Theorem in Riesz spaces]\label{t8}
Let $E$ be a Dedekind complete Riesz space with  $T$ a strictly positive conditional expectation operator on $E$ and weak order unit $e=Te$. Let $(P_n)$ be a sequence of band projections  on $E$ with  $T(P_ne) \to 0$ in order, as $n\to\infty$, and 
\begin{equation}\label{BS-assumption}
 \sum\limits_{n=1}^\infty T\Bigg(P_{n+m}\prod_{j=n}^{n+m-1}(I-P_j)e\Bigg)
\end{equation}
order convergent in $E$ for some $m \geq 1$. Then $\underset{n \to \infty}{\lim \sup}\ P_n  = 0$.
\end{thm}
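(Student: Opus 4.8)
The plan is to transcribe the classical Balakrishnan--Stepanov argument into the arithmetic of commuting band projections, using Corollary~\ref{c1} in place of the First Borel--Cantelli Lemma and the order continuity of band projections and of the lattice operations in place of pointwise limits. Put $Q_n:=P_{n+m}\prod_{j=n}^{n+m-1}(I-P_j)$, a band projection; then (\ref{BS-assumption}) is exactly the statement that $\sum_{n\ge 1}T(Q_ne)$ is order convergent, so Corollary~\ref{c1} yields $\displaystyle\limsup_{n\to\infty}Q_n=0$. The combinatorial heart of the classical proof is that, past the last index at which a $Q_n$-event occurs, one block of $m$ consecutive failures of the $P_i$'s forces every later $P_i$ to fail; I would reproduce this order-theoretically and then let the hypothesis $T(P_ne)\to 0$ kill the residual contribution of such blocks.

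The first step is to prove that for every $p\ge 1$ and every $L\ge 0$,
\[
\prod_{i=p}^{p+m-1+L}(I-P_i)\ \ge\ \prod_{i=p}^{p+m-1}(I-P_i)\;\prod_{n=p}^{p+L-1}(I-Q_n),
\]
by induction on $L$. The inductive step rests on the identity
\[
(I-Q_{p+L})\prod_{i=p}^{p+m-1+L}(I-P_i)=\prod_{i=p}^{p+m+L}(I-P_i),
\]
a short computation obtained by substituting $Q_{p+L}=P_{p+m+L}\prod_{j=p+L}^{p+m+L-1}(I-P_j)$ and using idempotence and commutativity of band projections. Letting $L\to\infty$, and using $\prod(I-P_i)=I-\bigvee P_i$ together with order continuity of multiplication by the fixed band projection $\prod_{i=p}^{p+m-1}(I-P_i)$, this gives
\[
I-\bigvee_{i\ge p}P_i\ \ge\ \prod_{i=p}^{p+m-1}(I-P_i)\Big(I-\bigvee_{n\ge p}Q_n\Big),\qquad p\ge 1.
\]

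Next, fix $N\ge 1$ and set $X_N:=\big(\limsup_{n\to\infty}P_n\big)\wedge\big(I-\bigvee_{n\ge N}Q_n\big)$. For $k\ge 0$, taking $p=N+km\ (\ge N)$ in the displayed inequality and meeting with $\limsup_{n}P_n$, which lies below $\bigvee_{i\ge p}P_i$, forces $X_N\wedge\prod_{i=0}^{m-1}(I-P_{N+km+i})=0$, i.e. $X_N\le\bigvee_{i=0}^{m-1}P_{N+km+i}$; hence $X_N\le U_N:=\bigwedge_{k\ge 0}\bigvee_{i=0}^{m-1}P_{N+km+i}$. Since $U_Ne\le\sum_{i=0}^{m-1}P_{N+km+i}e$ for every $k$, positivity of $T$ gives $T(U_Ne)\le\bigwedge_{k\ge 0}\sum_{i=0}^{m-1}T(P_{N+km+i}e)$, and this infimum is $0$ because $T(P_ne)\to 0$ in order makes the nonnegative sequence $k\mapsto\sum_{i=0}^{m-1}T(P_{N+km+i}e)$ order-null, hence have infimum $0$. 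Strict positivity of $T$ and $e$ being a weak order unit then force $U_N=0$, so $X_N=0$.

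To conclude, I would observe that $\limsup_{n}Q_n=0$ gives $\bigvee_{N\ge 1}\big(I-\bigvee_{n\ge N}Q_n\big)=I-\limsup_{n}Q_n=I$, so that meeting with this increasing supremum and using order continuity of $\wedge$ yields $\limsup_{n\to\infty}P_n=\bigvee_{N\ge 1}X_N=0$. The step I expect to be the main obstacle is precisely the block-propagation inequality $I-\bigvee_{i\ge p}P_i\ge\prod_{i=p}^{p+m-1}(I-P_i)\big(I-\bigvee_{n\ge p}Q_n\big)$ — the order-theoretic form of the combinatorial core of the classical theorem; the remaining steps are routine bookkeeping with commuting band projections, order limits, and the Fatou/Borel--Cantelli machinery already available. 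One could instead build the block estimate from Lemma~\ref{l3}, via the first-hitting decomposition $\bigvee_{n\ge k}P_n\le\bigvee_{i=0}^{m-1}P_{k+i}\vee\bigvee_{i\ge k}Q_i$, but then care is needed because the finite suprema $\bigvee_{i=0}^{m-1}P_{k+i}$ are not monotone in $k$.
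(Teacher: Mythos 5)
Your proof is correct, but it follows a genuinely different route from the paper's. You transcribe the classical two-stage argument: first apply the Riesz-space Borel--Cantelli result (Corollary \ref{c1}) to the disjointified blocks $Q_n=P_{n+m}\prod_{j=n}^{n+m-1}(I-P_j)$ to get $\limsup_n Q_n=0$, and then prove a block-propagation inequality $I-\bigvee_{i\ge p}P_i\ \ge\ \prod_{i=p}^{p+m-1}(I-P_i)\bigl(I-\bigvee_{n\ge p}Q_n\bigr)$ by induction, which (after meeting with $I-\bigvee_{n\ge N}Q_n$ and intersecting over the blocks $p=N+km$) reduces everything to $\bigwedge_k\bigvee_{i=0}^{m-1}P_{N+km+i}=0$, killed by $T(P_ne)\to 0$. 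All the individual steps check out: the identity $(I-Q_{p+L})\prod_{i=p}^{p+m-1+L}(I-P_i)=\prod_{i=p}^{p+m+L}(I-P_i)$ holds because $\prod_{i=p}^{p+m-1+L}(I-P_i)$ already absorbs the factor $\prod_{j=p+L}^{p+m+L-1}(I-P_j)$ of $Q_{p+L}$; the passage to $L\to\infty$ uses only order continuity of band projections; and the final distribution of $\wedge$ over the increasing supremum $\bigvee_N(I-\bigvee_{n\ge N}Q_n)=I$ is the infinite distributive law. The paper instead never invokes Corollary \ref{c1} in this theorem: it uses the first-hitting decomposition of Lemma \ref{l3} to write $\bigvee_{n\ge k}P_ne$ as a \emph{sum} of disjoint pieces $P_n\prod_{j=k}^{n-1}(I-P_j)e$, applies $T$, and bounds the resulting series directly by the first $m$ terms (controlled by $T(P_ne)\to 0$) plus a tail dominated termwise by the tail of (\ref{BS-assumption}); strict positivity then gives $\limsup_n P_n=0$ in one stroke. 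The paper's route is shorter and yields a quantitative bound on $T(\limsup_n P_ne)$; yours isolates the combinatorial content of the classical proof (a single block of $m$ consecutive failures propagates forever off $\limsup Q_n$) and makes the role of the Borel--Cantelli corollary explicit, at the cost of the extra induction and Boolean bookkeeping. Your closing remark about the alternative via Lemma \ref{l3} is essentially the paper's argument, and the worry about non-monotonicity of $\bigvee_{i=0}^{m-1}P_{k+i}$ in $k$ is resolved there by applying $T$ and using $T(P_ne)\to 0$ on the finite sum rather than any monotone limit.
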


\begin{proof}
From definition of the limit supremum we have for each $k\in\N$,
\begin{equation}\label{2019-sup}
T\bigg(\limsup_{n\to \infty}P_ne \bigg) = T\Bigg(\bigwedge_{j\in \mathbb{N}}\bigvee_{n\geq j}P_ne\Bigg)
\leq T\Bigg(\bigvee_{n\geq k}P_ne\Bigg). 
\end{equation}
Letting $N\to \infty$ in Lemma \ref{l3}, we get
\begin{equation}\label{eq5.w}
\bigvee_{n\geq k}P_ne= \bigvee_{n\geq k}P_n\prod_{j=k}^{n-1}(I-P_j)e.
\end{equation}
Combining (\ref{2019-sup}) and (\ref{eq5.w}), we obtain
\begin{align}\label{eq5}
T\bigg(\limsup_{n\to \infty}P_ne \bigg)\leq T\Bigg(\bigvee_{n\geq k}P_ne\Bigg)=T\Bigg(\bigvee_{n\geq k}P_n\prod_{j=k}^{n-1}(I-P_j)e\Bigg).
\end{align}
However, as shown in Lemma \ref{l3}, for $n\ne m$,
$$(P_n\prod_{j=k}^{n-1}(I-P_j))(P_m\prod_{j=k}^{m-1}(I-P_j))=0,$$ thus,  
\begin{equation}\label{2019-new-77}
\bigvee_{n\geq k}P_n\prod_{j=k}^{n-1}(I-P_j)=\sum_{n\geq k}P_n\prod_{j=k}^{n-1}(I-P_j).
\end{equation}
Combining (\ref{eq5}) and (\ref{2019-new-77}) gives, for $k\in\N$,
\begin{align}\label{eq51}
T\bigg(\limsup_{n\to \infty}P_ne \bigg)\leq \sum_{n\geq k}TP_n\prod_{j=k}^{n-1}(I-P_j)e.
\end{align}
Now for $k\ge m+1$ we have
\begin{align}\label{eq511}
\sum_{n\geq k}TP_n\prod_{j=k}^{n-1}(I-P_j)e=\sum_{n= k}^{k+m-1}TP_n\prod_{j=k}^{n-1}(I-P_j)e
+\sum_{n\ge k}TP_{n+m}\prod_{j=k}^{n+m-1}(I-P_j)e.
\end{align}
For $n\ge k$ we have $\displaystyle{\prod_{j=k}^{n+m-1}(I-P_j)\le\prod_{j=n}^{n+m-1}(I-P_j)}$ giving
\begin{align}\label{eq512}
\sum_{n\ge k}TP_{n+m}\prod_{j=k}^{n+m-1}(I-P_j)e\le \sum_{n\ge k}TP_{n+m}\prod_{j=n}^{n+m-1}(I-P_j)
\end{align}
which tends to zero in order as $k\to\infty$ since  (\ref{BS-assumption}) is order convergent in $E$. 
Further
\begin{align}\label{eq513}
\sum_{n= k}^{k+m-1}TP_n\prod_{j=k}^{n-1}(I-P_j)e\le \sum_{n= k}^{k+m-1}(TP_ne)
\end{align}
which tends to zero in $E$ as $k\to\infty$ as $TP_ne\to 0$ in order as $n\to \infty$.
\end{proof}

We note that (\ref{BS-assumption}) and (\ref{eq5}) with Theorem 
\ref{l2} give that $\displaystyle{\liminf_{n\to\infty}P_n}=0$. 
Thus the assumption $\displaystyle{\lim_{n\to\infty}TP_ne}=0$, as used in Theorem \ref{t8}, cannot be replaced by $\displaystyle{\liminf_{n\to\infty}TP_ne}=0$.


\end{document}